\newcommand{\F}{\mathbb{F}}
\DeclareSymbolFontAlphabet{\mathbb}{AMSb} %to ensure that the meaning of \mathbb does not change
\DeclareSymbolFontAlphabet{\mathbbl}{bbold}
\newtheorem{thm}{Theorem}[subsection]
\newtheorem{lem}[thm]{Lemma}
\theoremstyle{definition}
\newtheorem{defn}[thm]{Definition}
\theoremstyle{remark}
\newtheorem{rem}[thm]{Remark}
\begin{document}

\newcommand\restr[3]{{% we make the whole thing an ordinary symbol
  \left.\kern-\nulldelimiterspace % automatically resize the bar with \right
  #1 % the function
  \vphantom{\big|} % pretend it's a little taller at normal size
  \right|_{#2} % this is the delimiter
  }}

\makeatletter
% we use \prefix@<level> only if it is defined
\renewcommand{\@seccntformat}[2]{%
  \ifcsname prefix@#1\endcsname
    \csname prefix@#1\endcsname
  \else
    \csname the#1\endcsname\quad
  \fi}
% define \prefix@section
%\newcommand\prefix@section{Section \thesection: }
\makeatother

\makeatletter
\newcommand{\colim@}[3]{%
  \vtop{\m@th\ialign{##\cr
    \hfil$#1\operator@font colim$\hfil\cr
    \noalign{\nointerlineskip\kern1.5\ex@}#2\cr
    \noalign{\nointerlineskip\kern-\ex@}\cr}}%
}
\newcommand{\colim}{%
  \mathop{\mathpalette\colim@{\rightarrowfill@\textstyle}}\nmlimits@
}
\makeatother

\newcommand\rightthreearrow{%
        \mathrel{\vcenter{\mathsurround0pt
                \ialign{##\crcr
                        \noalign{\nointerlineskip}$\rightarrow$\crcr
                        \noalign{\nointerlineskip}$\rightarrow$\crcr
                        \noalign{\nointerlineskip}$\rightarrow$\crcr
                }%
        }}%
}

\title{A geometric model for mod $p$ bordism}         % Enter your title between curly bracse
\author{Kiran Luecke}        % Enter your name between curly braces
\date{\today}          % Enter your date or \today between curly braces
\maketitle
$$\textit{Dedicated to Hartmut Luecke on the occasion of his 60th birthday.}$$

\begin{abstract}
In this note I give a positive solution to Bullett's conjecture (posed in \cite{bullet}) regarding a geometric presentation of the universal mod $p$ oriented ring spectrum.
\end{abstract}

\tableofcontents

\section{Introduction}

Chromatic homotopy theory begins with the observation that there is a correspondence between complex oriented cohomology theories and formal group laws. In  \cite{quillen} Quillen proves that $MU$, the cohomology theory of complex bordism, corresponds to the \textit{universal} formal group law. In that same paper Quillen also proves that $MO$, the cohomology theory of unoriented bordism, corresponds to the \textit{universal} $\mathbb{F}_2$-formal group law. In \cite{bullet} Bullett gives a natural definition $\mathbb{F}_p$-formal group laws for larger primes (cf. Def \ref{zpfgldef}), and exhibits the existence of a cohomology theory $V$ which corresponds to the universal $\mathbb{F}_p$-formal group law. Note that $MU$ and $MO$ are geometrically defined cohomology theories, which is of interest since it links algebraic topology with manifold geometry. While Bullett's construction of $V$ certainly features a lot of manifold geometry, in the end the cohomology theory $V$ is presented as an algebraically specified sub-theory of a different, geometrically defined cobordism theory. However Bullett does conjecture\footnote{Technically, he writes that `...one might conjecture..." but I will take a charitable reading of that wording, since he turns out to be right!} a certain geometric description of $V$ in \cite{bullet}. In this paper I will give a positive solution to that conjecture.

.

\ 

\textit{Notation and Disclaimers}: The symbol $p$ denotes an odd prime. All discrete rings are implicitly graded-commutative, so that for example if $a$ is an odd degree element then the free $\F_p$-algebra $\F_p[a]$ is really an exterior algebra, i.e. $a^2=0.$ $C_p$ denotes the cyclic group of order $p$. $C_p^n\rtimes\Sigma_n$ denotes the semi-direct product defined by the permutation action of $\Sigma_n$ on $C_p^n$. To streamline notation I will use $BC_p$ to denote both the classifying space of $C_p$ and its suspension spectrum $\Sigma^\infty_+ BC_p$ because it is almost always the latter that is being referred to in this note. An $MU$-structure is a stably almost complex structure. An $MU$-manifold is a manifold with an $MU$-structure.

\section{The set up and the technical results}

\subsection{$\mathbb{F}_p$-formal group laws and Bullett's mod-$p$ bordism spectra}
\subsubsection{$\mathbb{F}_p$-formal group laws}
\begin{defn} Let $E$ be a complex oriented homotopy ring spectrum. A mod $p$ orientation of $E$ consists of the data of 
\begin{enumerate}
    \item a complex orientation of $E$ with universal chern class $c_E\in E^2BU(1)$ (i.e. a homotopy ring map $MU\rightarrow E$)
    \item a class $e\in E^1BC_p$ such that if $i:BC_p\rightarrow BU(1)$ denotes the inclusion and $x=i^*c_E$ then $E^*BC_p\simeq E^*[[x,e]]/(e^2)$ and $e$ restricts to the generator of $E^1S^1$.
\end{enumerate}
\end{defn}

\begin{rem} Some people prefer to define a universal chern class $c_E$ as a choice of factorization of the unit map $\mathbb{S}\rightarrow E$ through the bottom cell $\mathbb{S}\rightarrow\Sigma^{-2}BU(1)$. Similarly the class $e$ in item 2 is equivalent to a factorization of the unit map through the bottom cell $\mathbb{S}\rightarrow \Sigma^{-1}BC_p$.
\end{rem}
\begin{rem}
Let $E$ be a mod $p$ oriented homotopy ring spectrum with formal group lay $F(x,y)$. Then the $p$ series $[p]_F(x)$ is zero, because the pullback $i^*:E^*BU(1)\rightarrow E^*BC_p$ is both injective and kills the $p$ series.
\end{rem}

Just as a formal group law is the structure displayed by a complex oriented cohomology theory at $BU(1)$, an $\mathbb{F}_p$-formal group law will be the structure displayed by a mod $p$ oriented cohomology theory at $BC_p$. The presence of the second generator and the oddity of its degree make things a little tedious to write out.

\begin{defn}\label{zpfgldef} An $\mathbb{F}_p$-formal group law over a graded commutative $\mathbb{F}_p$-algebra $R$ is a pair of power series $F_1,F_2\in R[[x_1,x_2,e_1,e_2]]$ $|x_i|=2$, $|e_i|=1$ such that the following hold: write $\xi_i=(x_i,e_i)$ and $F(\xi_1,\xi_2)=(F_1(\xi_1,\xi_2),F_2(\xi_1,\xi_2))$. Then
\begin{enumerate}
    \item $F(0,\xi)=F(\xi,0)=0$ (identity)
    \item $F(\xi_1,F(\xi_2,\xi_3))=F(F(\xi_1,\xi_2),\xi_3)$ (associativity)
    \item $F(\xi_1,\xi_2)=F(\xi_2,\xi_1)$ (commutativity)
    \item the $p$-fold iterate $F(\xi,F(\xi,...,\xi)...)$ is zero ($p$-series is zero)
    \item $F_2$ is independent of $e_1$ and $e_2$ (ordinary formal group law)
\end{enumerate}

\end{defn}

\begin{lem}
Let $E$ be a mod $p$ oriented cohomology theory. Then the multiplication map $m:BC_p\times BC_p\rightarrow BC_p$ defines an $\mathbb{F}_p$ formal group law over $E^*$.
\end{lem}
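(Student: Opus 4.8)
The plan is to read the $\mathbb{F}_p$-formal group law directly off the ring map $m^*\colon E^*BC_p \to E^*(BC_p\times BC_p)$ induced by the $H$-space multiplication on $BC_p$, and then to verify each of the five axioms of Definition \ref{zpfgldef} by translating the corresponding property of the space-level map $m$. Throughout I exploit that $BC_p$ is the classifying space of an abelian group, hence a homotopy-commutative, homotopy-associative, unital $H$-space (indeed an infinite loop space, being $K(C_p,1)$), and that the mod $p$ orientation makes $E^*BC_p\simeq E^*[[x,e]]/(e^2)$ topologically free over $E^*$.

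First I would establish the K\"unneth isomorphism
$$E^*(BC_p\times BC_p)\cong E^*[[x_1,x_2,e_1,e_2]]/(e_1^2,e_2^2),$$
where $x_i=\pi_i^*x$ and $e_i=\pi_i^*e$ for the two projections $\pi_i$, together with its evident analogue for the triple product $BC_p^{3}$. This is precisely where the mod $p$ orientation hypothesis enters: freeness of $E^*BC_p$ forces the external product onto the completed tensor product to be an isomorphism, which I would deduce by writing $BC_p$ as the colimit of its skeleta and collapsing the K\"unneth spectral sequence. I expect this bookkeeping with completions and the odd generators to be the main technical obstacle; once it is in place, everything else is formal. I then define $F$ by applying $m^*$ to the generators, so that $m^*x$ is a degree-$2$ series and $m^*e$ a degree-$1$ series, these being the two components of $F$, with the labeling fixed by axiom (5).

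Axiom (5) is the cleanest. Since $x=i^*c_E$ pulls back from $BU(1)$ and $i\colon BC_p\to BU(1)$ is $B$ of the homomorphism $C_p\hookrightarrow U(1)$, the square comparing $m$ with the tensor-product multiplication $\mu$ on $BU(1)$ commutes up to homotopy; hence $m^*x=(i\times i)^*\mu^*c_E=F^{\mathrm{ord}}(x_1,x_2)$, the ordinary formal group law of $E$, which involves no $e_i$. The unit axiom is the pullback of the fact that $m$ restricted to either axis $\ast\times BC_p$ or $BC_p\times\ast$ is homotopic to the identity. Commutativity follows because $C_p$ is abelian, so $m\circ\mathrm{swap}\simeq m$ and therefore $F(\xi_1,\xi_2)=F(\xi_2,\xi_1)$; associativity follows from homotopy-associativity of $m$, comparing $m\circ(m\times\mathrm{id})$ with $m\circ(\mathrm{id}\times m)$ via the triple-product K\"unneth identification.

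Finally, for vanishing of the $p$-series (axiom (4)) I would observe that the $p$-fold iterate is the pullback of $(x,e)$ along the composite $BC_p\xrightarrow{\Delta}(BC_p)^{p}\xrightarrow{m_p}BC_p$, which is $B$ of the endomorphism $g\mapsto g^{p}$ of $C_p$. As every element of $C_p$ satisfies $g^{p}=1$, this endomorphism is trivial, so the composite is null-homotopic and kills the reduced classes $x$ and $e$; hence the $p$-fold iterate is zero, consistent with the earlier remark that $i^*$ kills the $p$-series of the underlying ordinary law. Since all of these verifications reduce to pulling back homotopies of maps of spaces, the entire weight of the argument rests on the K\"unneth step.
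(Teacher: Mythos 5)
Your proposal is correct and follows exactly the paper's (one-line) argument: define $F_1=m^*e$ and $F_2=m^*x$ and deduce the five axioms from the unitality, commutativity, associativity, $p$-torsion, and $BU(1)$-compatibility of $m$. You simply supply the details the paper omits, in particular the K\"unneth identification of $E^*(BC_p\times BC_p)$, which is indeed the only step requiring the mod $p$ orientation hypothesis.
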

\begin{proof}
The power series $F_1$ and $F_2$ are the images under $m^*$ of the classes $e$ and $x$. The five conditions follow from the properties of $m$.
\end{proof}

As in the case of ordinary formal group laws, it is clear that there is a ring carrying a universal formal group law---it is the quotient of a big old free graded-commutative algebra on generators representing the coefficients of $F_1$ and $F_2$ modulo the relations imposed by the five conditions above. The difficulty is calculating what that ring really looks like, just like in the case of ordinary formal group laws and Lazard's theorem.

\begin{defn}\label{modplaz}
Define the mod $p$ Lazard ring $L_p$ to be the ring carrying the universal $\mathbb{F}_p$-formal group law. 
\end{defn}
\begin{thm} (Bullet \cite{bullet} 1.10) 
$$L_p\simeq\mathbb{F}_p[a_p,b_r,s_r]\ \ |a_p|=2p,\ |b_r|=2r,\ |s_r|=2r+1,\ p,r>0,\ r\neq p^k-1.$$
\end{thm}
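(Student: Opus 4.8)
The plan is to mimic Lazard's computation of the ordinary Lazard ring: build the universal $\mathbb{F}_p$-formal group law one degree at a time and read off the indecomposables. First I would use the exterior relations $e_1^2 = e_2^2 = 0$ together with the identity and commutativity axioms to put the pair $(F_1,F_2)$ into normal form. Since $|e_i| = 1$, the component $F_1$ decomposes over the basis $\{1, e_1, e_2, e_1 e_2\}$ of $E^*(BC_p \times BC_p)$ as
\begin{equation*}
F_1 = a(x_1,x_2) + e_1\, g(x_1,x_2) + e_2\, g(x_2,x_1) + e_1 e_2\, k(x_1,x_2),
\end{equation*}
where axiom (5) makes $F_2$ an ordinary formal group law in the $x_i$, the identity axiom forces $g(x,0)=1$ and $a(x,0)=a(0,x)=0$, and commutativity forces $a$ symmetric and $k$ antisymmetric. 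Thus $g$ carries even-degree coefficients while $a$ and $k$ carry the odd-degree coefficients, and these odd coefficients are the only possible source of the odd generators $s_r$ of degree $2r+1$.

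The even part comes first. Setting all $e_i=0$, axioms (1)--(4) say exactly that $F_2$ is an ordinary formal group law over $L_p$ with $[p]_{F_2}(x)=0$. By Lazard's theorem $\mathbb{F}_p \otimes L \cong \mathbb{F}_p[b_1,b_2,\dots]$ with $|b_r|=2r$, and I would then impose $[p]_{F_2}=0$. The standard analysis of the $p$-series over $\mathbb{F}_p$ shows that the coefficient of $x^{p^k}$ in $[p]_{F_2}(x)$ is, modulo decomposables, the generator $b_{p^k-1}$ in degree $2(p^k-1)$, while the remaining coefficients impose no new relations on the surviving classes. Hence this even, $e$-free sector contributes precisely $\mathbb{F}_p[b_r : r \neq p^k-1]$.

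The odd and mixed structure is the heart of the argument. Feeding the normal form into associativity and separating by the monomials $e_1$, $e_2$, $e_1 e_2$ and the $e$-free part yields a coupled system of twisted cocycle equations relating $g,a,k$ to $F_2$ (for instance the $e_1$-coefficient reads $g(x_1, F_2(x_2,x_3)) = g(x_1,x_2)\, g(F_2(x_1,x_2),x_3)$, and the $e$-free part relates $a$ to $g$). I would solve these inductively on degree: at each stage the new obstruction is an (anti)symmetric homogeneous $2$-cocycle, and Lazard's symmetric $2$-cocycle lemma identifies the space of choices as cyclic, contributing one new odd indecomposable in each degree $2r+1$. Imposing the $p$-series axiom (4), which on the odd sector becomes the condition $h_p(x)=0$ for the explicit iterate $h_{n+1} = g(x,[n]_{F_2}x) + h_n\, g([n]_{F_2}x, x)$, then removes precisely the classes in degrees $2(p^k-1)+1$ — mirroring how $[p]_{F_2}=0$ removed $b_{p^k-1}$ — while leaving the family $s_r$ with $r \neq p^k-1$; its lowest obstruction, which is even and lives in degree $2p$, fails to be a coboundary and persists as the single extra generator $a_p$.

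Finally I would prove freeness. Assembling the classes found above, I would exhibit an $\mathbb{F}_p$-formal group law over $\mathbb{F}_p[a_p, b_r, s_r]$ realizing each generator independently (equivalently, compare the Poincaré series of $L_p$ degreewise with that of this polynomial ring), forcing the classifying map to be an isomorphism. The main obstacle I expect is the coupled inductive step of paragraph three: disentangling the mixed associativity equations for $g,a,k$ over $F_2$ well enough to pin down exactly which indecomposables the symmetric cocycle lemma contributes, and — most delicately — verifying that the odd $p$-series constraint $h_p=0$ kills the odd classes at $r = p^k-1$ while leaving behind exactly one even generator $a_p$ in degree $2p$ rather than a whole family.
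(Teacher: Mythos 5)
The paper does not actually prove this statement: it is quoted as Theorem~1.10 of Bullett's paper and used as a black box, so there is no in-text argument to measure yours against. Judged on its own terms, your Lazard-style outline (normal form for $F_1$ over the basis $1,e_1,e_2,e_1e_2$; an $e$-free sector governed by $F_2$ with $[p]_{F_2}=0$; a twisted-cocycle sector governed by $g,a,k$; then the vanishing of the full $p$-series) is the right general shape, but the two places where the theorem's actual content lives are asserted rather than derived.

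First, the exceptional even generator $a_p$. You claim the odd sector's ``lowest obstruction is even and lives in degree $2p$,'' but nothing in your setup produces an even class there, and the displayed cocycle equation you would feed into that analysis is already incomplete: the $e_1$-coefficient of associativity is not $g(x_1,F_2(x_2,x_3))=g(x_1,x_2)\,g(F_2(x_1,x_2),x_3)$ but contains the additional coupling term $a(x_2,x_3)\,k\bigl(x_1,F_2(x_2,x_3)\bigr)$, which is a product of two odd-coefficient series and hence carries \emph{even} coefficients --- exactly the kind of term that cannot be dropped if one wants to locate $a_p$. Similarly, the first component of the $p$-fold iterate splits as $A_p(x)+e\,H_p(x)$ with $A_p$ odd and $H_p$ even; axiom (4) forces both to vanish, and your recursion $h_{n+1}=g(x,[n]x)+h_n\,g([n]x,x)$ captures neither the $a$- and $k$-contributions to $H_p$ nor the separate condition $A_p=0$. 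Second, the statement that the non-$p$-power coefficients of $[p]_{F_2}$ ``impose no new relations on the surviving classes'' --- i.e.\ that $L/(p,\text{coefficients of }[p])$ is polynomial on classes in degrees $2r$, $r\neq p^k-1$ --- is itself a theorem requiring the indecomposability computation for the $p$-series, not a remark. Until the coupled system for $(g,a,k)$ is genuinely solved degree by degree, the generator count --- in particular that exactly one extra even class $a_p$ survives in degree $2p$ rather than a family, and that the odd classes die precisely at $r=p^k-1$ --- remains unverified; this is the part of Bullett's proof your sketch defers rather than supplies.
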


\subsubsection{mod-$p$ bordism spectra}

In this section I recall Bullett's construction of mod $p$ bordism spectra. See \cite{bullet} and Chapter 4 of [3] for details.

\begin{defn}\label{corners} (cf. \cite{bullet} Def. 1.4) A manifold-with-(unlabelled)-corners of dimension $k$ is a ``manifold" modeled on the spaces $U_j=\{x\in\mathbb{R}^k | x_1,...,x_j\geq 0\}$. The strata of each of the model spaces induce strata in $M$. The codimension $j$ stratum is called the $j$-corners of $M$. If the only nonempty strata are of codimension 0 and 1 then $M$ is a classical manifold-with-boundary. The $j$-corners are a $j$-fold self intersection of the $1$-corners. Locally along a $j$-fold self intersection the $j$ parts of the 1-corners intersecting there can be labelled, but this may not be possible globally along the $j$-corners. Instead there is a $\Sigma_j$-bundle over the $j$-corners which records the failure of that globalization; it is called the \textit{face-labelling bundle}.
%A manifold-with-(unlabelled)-corners $M$ of dimension $d$ is a ``manifold" modeled on the spaces $U_j=\{x\in\mathbb{R}^d | x_1,...,x_j\geq 0\}$. The strata of each of the model spaces induce strata in $M$. The complement of the codimension $j+1$ stratum inside the codimension $j$ stratum is called the $j$-corners of $M$. If the only nonempty strata are of codimension 0 and 1 then $M$ is a classical manifold-with-boundary. The $j$-corners are a subset of the $j$-fold self intersection of the codimension 1 stratum. Locally along an $j$-fold self intersection there are $j$ well-defined components of the 1-corners whose closures meet there, but this may not be possible globally. In fact there is a $\Sigma_j$-bundle over the $j$-corners which records the failure to globally label the 1-corners along the $j$-corners, and it is called the \textit{face-labelling bundle}. Over a point $p$ in the $j$-corners the fiber of this bundle is the set of components of the intersection of the 1-corners with a neighborhood of $p$ in $M$.

\end{defn}

\begin{defn}
An $^nV_\infty$-manifold is a manifold-with-$j$-corners for $0\leq j\leq n$, together with a $MU$-structure on the interior of the codimension 0 stratum, a free $C_p$ action on the 1-corners preserving the $MU$-structure induced there, and such that on the $j$-corners the $C_p$ action combines to a free $C_p^j\rtimes\Sigma_j$ action on the face-labelling bundle (the semidirect product is the one associated to the permutation action of $\Sigma_j$ on $C_p^j$).
\end{defn}

\begin{defn}\label{bulletspectrainfty} (cf. \cite{bullet} Definition 1.3)
Define the spectrum $^nV_\infty$ to be the bordism theory of $^nV_\infty$-manifolds. More precisely, $^nV_\infty$ is the stable homotopy type representing the following cohomology theory on manifolds: for a manifold $X$, the group $^nV_\infty^k(X)$ is the set of \textit{cobordism} (defined shortly) classes of $\text{dim}X-k$-dimensional $^nV_\infty$-manifolds $Q$ with an $^nV_\infty$-oriented, proper map $f:Q\rightarrow X$, which means that $Q$ is presented as a submanifold of $\mathbb{R}^\infty\times X$ with a $MU$-structure on its normal bundle (on the interior of the zero-corners) and $f$ is the projection to $X$. Two such data $(Q,f)$ and $(Q',f')$ are \textit{cobordant} if there is a $\text{dim}X-k+1$-dimensional $^nV_\infty$-manifold  $R$ with an $^nV_\infty$-oriented, proper map $f:R\rightarrow X\times\mathbb{R}$ that is transverse to $X\times\{0\}$ and $X\times\{1\}$ and whose pullbacks over those submanifolds are identified with $(Q,f)$ and $(Q',f')$.  
\end{defn}

\begin{defn}\label{definingfilinfty}
Define $V_\infty $ as the colimit of $MU\simeq \ ^0V_\infty\rightarrow\ ^1V_\infty\rightarrow\ ^2V_\infty\rightarrow ...$ where the connecting maps are the maps induced by regarding an $^nV_\infty$-manifold as an $^{n+1}V_\infty$-manifold with empty $(n+1)$-corners.
\end{defn}

\begin{defn}\label{MUwreath} (cf. \cite{bullet} page 14)
Let $(MU\otimes BC_p)^{\otimes_{MU}n}_{h\Sigma_n}$ be the homotopy quotient of $(MU\otimes BC_p)^{\otimes_{MU}n}$ by the permutation action of $\Sigma_n$. Geometrically $(MU\otimes BC_p)^{\otimes_{MU}n}_{h\Sigma_n}$ is the spectrum associated to the bordism theory of $MU$-manifolds with free a $C_p^n\rtimes\Sigma_n$-action such that $C_p^n$ preserves the $MU$-structure and $\Sigma_n$ acts by the sign representation it.
\end{defn}

\begin{defn}\label{joinaction} (cf. \cite{bullet} page 11)
Let $C_p^{\star n}$ be the `unravelled' $n$-fold join of $C_p$, presented as the subset of $(C_p\times\mathbb{R})^n$ consisting of those $(z_1,t_1,...,z_n,t_n)$ such that the $t_i$ are nonnegative and sum to 1. That admits a natural action of $C_p^n\rtimes\Sigma_n$ considered as the group of $n\times n$ permutation matrices with values in $\mathbb{F}_p\simeq C_p$. Note that $C_p^{\star n}$ is the $^{n-1}V_\infty$-manifold defined by $C_p^n\times\Delta_\text{top}^{n-1}$ together with the $C_p$-action that over the $i$th face of $\Delta_\text{top}^{n-1}$ acts by translation on the $ith$ factor of $C_p^n$.
\end{defn}

\begin{rem}
In the language of \cite{bullet}, $C_p^{\star n}$ is the $^nV_\infty$-manifold obtained by `cutting along the singularity strata' of the $n$-fold join $C_p^{*n}$.
\end{rem}

\begin{lem}\label{geometricfilinfty} (cf. \cite{bullet} Proposition 2.1)
In the defining filtration of $V_\infty $ in Definition \ref{definingfilinfty}, the successive quotients are given by
$$^nV_\infty/\ ^{n-1}V_\infty\simeq \Sigma^{n}(MU\otimes BC_p)^{\otimes_{MU}n}_{h\Sigma_n}.$$
The quotient map is geometrically presented as the map that sends an $^nV_\infty$-manifold to the $MU$-manifold with free a $C_p^n\rtimes\Sigma_n$-action (cf. Definition \ref{MUwreath}) defined by its $n$-corner's face-labelling bundle.
Moreover, the attaching map
$$\Sigma^{n-1}(MU\otimes BC_p)^{\otimes_{MU}n}_{h\Sigma_n}\rightarrow\ ^{n-1}V_\infty$$
whose cofiber is $^nV_\infty$ is presented geometrically as the map that sends an $MU$-manifold $P$ with a free $C_p^n\rtimes\Sigma_n$-action\footnote{The action interacts appropriately with the $MU$-structure (cf. Definition \ref{MUwreath}).} to the $^{n-1}V_\infty$-manifold given by the $C_p^{\star n}$-bundle $P\times_{C_p^n\times\Sigma_n}C_p^{\star n}$ (cf. Definition \ref{joinaction})
\end{lem}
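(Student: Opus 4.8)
The plan is to exhibit the cofiber sequence
\[
\Sigma^{n-1}(MU\otimes BC_p)^{\otimes_{MU}n}_{h\Sigma_n}\xrightarrow{a}\ ^{n-1}V_\infty\xrightarrow{i}\ ^nV_\infty\xrightarrow{q}\Sigma^{n}(MU\otimes BC_p)^{\otimes_{MU}n}_{h\Sigma_n},
\]
in which $i$ is the defining inclusion of Definition \ref{definingfilinfty}, $q$ is the geometric top-stratum collapse sending an $^nV_\infty$-manifold to the $MU$-manifold-with-$(C_p^n\rtimes\Sigma_n)$-action carried by its $n$-corner face-labelling bundle, and $a$ is the geometric join-bundle map. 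Both assertions of the lemma are then read off: the quotient $^nV_\infty/\ ^{n-1}V_\infty$ is the cofiber of $i$, namely $\Sigma^{n}(MU\otimes BC_p)^{\otimes_{MU}n}_{h\Sigma_n}$ with $q$ as its collapse map, while $a$ is the attaching map, the shift from $\Sigma^{n-1}$ to $\Sigma^{n}$ being the usual degree shift in a Puppe sequence.

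First I would establish the quotient identification by analyzing a tubular neighborhood of the top stratum in the geometric model of Definition \ref{bulletspectrainfty}. The $n$-corner stratum $P$ of an $^nV_\infty$-manifold is a closed manifold of codimension $n$ carrying an $MU$-structure and a free $C_p^n\rtimes\Sigma_n$-action on its face-labelling bundle, with $\Sigma_n$ permuting the $n$ incident corner directions; by Definition \ref{MUwreath} its bordism class is exactly a class in $(MU\otimes BC_p)^{\otimes_{MU}n}_{h\Sigma_n}$. The normal directions to $P$ are the corner coordinates, modeled on $[0,\infty)^n\cong\mathrm{Cone}(\Delta^{n-1})$, so collapsing the closed locus of shallower corners (which represents $^{n-1}V_\infty$) presents the quotient as the bordism theory of $P$ together with this normal cone. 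The permutation of the $n$ normal coordinates twists the normal orientation by the sign of $\Sigma_n$, matching the sign representation built into Definition \ref{MUwreath}, and the codimension $n$ produces the suspension $\Sigma^{n}$; this both identifies $^nV_\infty/\ ^{n-1}V_\infty$ and realizes $q$ as the stated collapse.

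Next I would pin down the attaching map through the link of the top stratum. The link of an $n$-corner inside its normal $[0,\infty)^n$ is the simplex $\Delta^{n-1}$; restoring the free $C_p$-action along each of the $n$ incident $1$-corners and cutting along the singularity strata turns this link into exactly the unravelled join $C_p^{\star n}=C_p^n\times\Delta^{n-1}_{\text{top}}$ of Definition \ref{joinaction}, equivariantly for $C_p^n\rtimes\Sigma_n$. Hence the boundary of a tubular neighborhood of $P$ is the associated bundle $P\times_{C_p^n\rtimes\Sigma_n}C_p^{\star n}$, which is an $^{n-1}V_\infty$-manifold and is by definition the value of $a$ on $[P]$. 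To see that $a$ is the attaching map I would produce the canonical null-bordism in $^nV_\infty$: the cone bundle $P\times_{C_p^n\rtimes\Sigma_n}\mathrm{Cone}(C_p^{\star n})$, where $\mathrm{Cone}(C_p^{\star n})$ is the corner model $[0,\infty)^n$ with its $C_p$-faces, is an $^nV_\infty$-manifold whose single cone point is a genuine $n$-corner with stratum $P$ and whose remaining boundary is the link bundle; this exhibits $i\circ a$ as canonically null and factors the cofiber.

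The remaining and hardest point is exactness at $^nV_\infty$: every $^nV_\infty$-manifold whose top stratum $P$ is null-bordant in the wreath-$MU$ sense of Definition \ref{MUwreath} must be $^nV_\infty$-cobordant to one with empty $n$-corners, i.e.\ to an $^{n-1}V_\infty$-manifold. This is a resolution-of-corners argument in the spirit of Baas--Sullivan: given a wreath-$MU$ null-bordism $W$ of $P$, I would fill the tubular neighborhood of the top stratum by the associated bundle over $W$ in place of the cone bundle over $P$, thereby removing the deepest corners at the cost of a controlled cobordism. The delicate part, which I expect to be the main obstacle, is carrying out this replacement compatibly with every piece of structure at once: the nested corner collars, the $MU$-structures on the interior and on all corners, the free $C_p^n\rtimes\Sigma_n$-symmetry of the face-labelling bundle, and the sign-twisted normal orientations, and then checking that the glued object is a legitimate $^nV_\infty$-manifold in which no new top corners have been created. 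Granting this resolution, the sequence is exact; comparing it with the Puppe sequence of $i$ yields the claimed equivalence together with both geometric descriptions.
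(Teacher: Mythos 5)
Your proposal follows essentially the same route as the paper: verify directly that the three geometric maps form a cofiber sequence by checking exactness of the corresponding bordism sets, using the cone on $C_p^{\star n}$ to see that $i\circ a$ is null, and gluing a wreath-$MU$ null-bordism onto the $n$-corners (at one end of the product with an interval) to get exactness at $^nV_\infty$. The tubular-neighborhood and link analysis you add to identify the quotient and the attaching map is a more detailed version of what the paper leaves implicit.

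The one step you omit is exactness at $^{n-1}V_\infty$: you check only that $i\circ a\simeq 0$ there and then declare the whole sequence exact, but you still owe the converse, namely that an $^{n-1}V_\infty$-manifold $M$ which admits a null-bordism $Q$ as an $^nV_\infty$-manifold lies, up to $^{n-1}V_\infty$-bordism, in the image of $a$. The paper's one-sentence argument is that such a $Q$ exhibits $M$ as bordant to (the boundary of) a neighborhood of the $n$-corners of $Q$, which is exactly a bundle of unravelled joins $P\times_{C_p^n\rtimes\Sigma_n}C_p^{\star n}$ and hence in the image of $a$. With that supplied, your argument coincides with the paper's.
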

\begin{proof}
This is the proof given in \cite{bullet}. It suffices to show that the geometrically defined maps
$$...\rightarrow\Sigma^{n-1}(MU\otimes BC_p)^{\otimes_{MU}n}_{h\Sigma_n}\rightarrow\ ^{n-1}V_\infty\rightarrow\ ^nV_\infty\rightarrow\Sigma^{n}(MU\otimes BC_p)^{\otimes_{MU}n}_{h\Sigma_n}\rightarrow... $$
form a cofiber sequence, which is equivalent to the statement that the corresponding maps of collections of manifolds(-with-singularities) form an exact sequence up to bordism (the base manifold $X$ plays no real role except to clutter the notation). The composite of the left two maps is zero since the $^{n-1}V_\infty$-manifold $P\times_{C_p^n\rtimes\Sigma_n}C_p^{\star n}$ is null as an $^nV_\infty$-manifold (since the cone on $C_p^{\star n}$ is naturally an $^nV_\infty$-manifold). Moreover if an $^{n-1}V_\infty$-manifold $M$ is null as an $^nV_\infty$-manifold, then any such nullbordism $Q$ exhibits $M$ as bordant to a neighborhood of the $n$-corners of $Q$, which is in the image of the first map. The composite of the second two maps is clearly zero since an $^{n-1}V_\infty$-manifold has empty $n$-corners. Finally, if $M$ is an $^nV_\infty$-manifold whose $n$-corners' face-labelling bundle is null in  $(MU\otimes BC_p)^{\otimes_{MU}n}_{h\Sigma_n}$, then gluing such a nullbordism onto the $n$-corners of $M$ at one end of the product of $M$ with an interval produces an $^{n}V_\infty$-manifold that exhibits $M$ as bordant to the image of the inclusion $^{n-1}V_\infty\rightarrow\ ^nV_\infty$.
\end{proof}

\begin{defn}\label{definingfilone}
An $^nV_1$-manifold is an $^nV_\infty$-manifold $Q$ together with a decomposition\footnote{This means in particular that the labelled faces are disjoint after the 2-corners have been removed.} of the 1-corners into labelled faces $d_1Q,...,d_qQ$ (whose $r$-fold intersections are of codimension $r$) such that this labelling trivializes all the face-labelling bundles (which implies that $q\geq n'$ where $n'$ is the minimal $k$ for which $Q$ is a $^kV_\infty$-manifold). The associated bordism theory $^nV_1$ is defined analogously to Definition \ref{bulletspectrainfty}, and $V_1$ is defined as the colimit of $MU\simeq\ ^0V_1\rightarrow\ ^1V_1\rightarrow ...$ where the connecting maps are induced by viewing an $^nV_1$-manifold as an $^{n+1}V_1$-manifold with empty $(n+1)$-corners and the same labelling.
\end{defn}

\begin{rem}\label{winfty}
In \cite{bullet} (Definition 2.4) Bullett defines an $^nW$-manifold to be an $^nV_\infty$-manifold with a decomposition of the 1-faces into $n$ labelled sectors $d_1M,...,d_nM$, such that the labelling simultaneously trivializes all face-labelling bundles. He defines spectra $^nW$ and a colimit spectrum $^\infty W=\text{colim}_n\ ^nW$, where the bonding maps $^nW\rightarrow\ ^{n+1}W$ are those induced by viewing an $^nW$-manifold as an $^{n+1}W$-manifold with $d_{n+1}M=\emptyset$. Although every $^nV_1$-manifold admits the structure of an $^nW$-manifold by taking the minimal labelled decomposition, the connecting maps in the colimit are different and produce different spectra. In fact we will see that $V_1$ admits a ring structure while Bullett proves that $^\infty W$ does not (cf. \cite{bullet} page 24).
\end{rem}

\begin{rem}\label{mubcpgeom}
In the following it will be convenient to note that the spectrum $MU\otimes BC_p^{\otimes n}$ represents the bordism theory of $MU$-manifolds with a free action of $C_p^n$.
\end{rem}

\begin{lem}\label{geometricfilone} In the defining filtration of $V_1 $ in Definition \ref{definingfilone}, the successive quotients are given by
$$^nV_1/\ ^{n-1}V_1\simeq \Sigma^{n}MU\otimes BC_p^{\otimes n}.$$
The quotient map is geometrically presented as the map that sends an $^nV_1$-manifold to the $MU$-manifold with free a $C_p^n$-action (cf. Remark \ref{mubcpgeom}) defined by its $n$-corner's (trivial) face-labelling bundle.
Moreover, the attaching map
$$\Sigma^{n-1}MU\otimes BC_p^{\otimes n}\rightarrow\ ^{n-1}V_1$$
whose cofiber is $^nV_1$ is presented geometrically as the map that sends a $MU$-manifold $P$ with free a $C_p^n$-action to the $^{n-1}V_1$-manifold given by the $C_p^{\star n}$-bundle $P\times_{C_p^n}C_p^{\star n}$ (cf. Definition \ref{joinaction}).

\end{lem}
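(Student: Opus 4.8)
The plan is to run the proof of Lemma~\ref{geometricfilinfty} essentially verbatim, the only new ingredient being the bookkeeping of the labelling data that defines an $^nV_1$-manifold (Definition~\ref{definingfilone}). As there, it suffices to show that the geometrically described maps
$$\cdots\rightarrow\Sigma^{n-1}MU\otimes BC_p^{\otimes n}\rightarrow\ ^{n-1}V_1\rightarrow\ ^nV_1\rightarrow\Sigma^{n}MU\otimes BC_p^{\otimes n}\rightarrow\cdots$$
assemble into a cofiber sequence, which amounts to checking that the corresponding maps of bordism classes of manifolds-with-labelled-corners are exact up to bordism. The structural point that governs the whole argument is that a choice of labelling of the $1$-corners trivializes every face-labelling bundle; hence the free $C_p^n\rtimes\Sigma_n$-action on the $n$-corners' face-labelling bundle of a general $^nV_\infty$-manifold is refined, once the labelling is fixed, to a free $C_p^n$-action (the labelling is precisely the reduction of structure group from $C_p^n\rtimes\Sigma_n$ to $C_p^n$). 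By Remark~\ref{mubcpgeom} this data is exactly a point of $MU\otimes BC_p^{\otimes n}$, which explains why the $h\Sigma_n$-homotopy quotient appearing in Lemma~\ref{geometricfilinfty} is here replaced by the plain $n$-fold $MU$-linear tensor power.

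With this dictionary I would verify the four exactness conditions just as in the $V_\infty$ case. First, the composite $\Sigma^{n-1}MU\otimes BC_p^{\otimes n}\rightarrow\ ^{n-1}V_1\rightarrow\ ^nV_1$ vanishes because the $^{n-1}V_1$-manifold $P\times_{C_p^n}C_p^{\star n}$ bounds as an $^nV_1$-manifold: the cone on $C_p^{\star n}$ is naturally an $^nV_\infty$-manifold, and (recalling from Definition~\ref{joinaction} that $C_p^{\star n}=C_p^n\times\Delta_{\text{top}}^{n-1}$, so that its $n$ faces are canonically indexed by the simplex faces) this natural labelling extends over the cone and makes it an $^nV_1$-nullbordism. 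Because we quotient only by $C_p^n$ and not by $\Sigma_n$, the labelling descends to $P\times_{C_p^n}C_p^{\star n}$, which is therefore genuinely an $^{n-1}V_1$-manifold in the image of the attaching map. Next, if an $^{n-1}V_1$-manifold $M$ is null as an $^nV_1$-manifold, a nullbordism $Q$ exhibits $M$ as bordant through $^{n-1}V_1$-manifolds to a neighborhood of the $n$-corners of $Q$, which is a labelled $C_p^{\star n}$-bundle over an $MU$-manifold with free $C_p^n$-action and hence lies in the image of $\Sigma^{n-1}MU\otimes BC_p^{\otimes n}$. The composite of the right two maps is zero because an $^{n-1}V_1$-manifold has empty $n$-corners. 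Finally, if the $n$-corners' face-labelling bundle of an $^nV_1$-manifold $M$ is null in $MU\otimes BC_p^{\otimes n}$, gluing such a nullbordism onto the $n$-corners at one end of $M\times[0,1]$ produces an $^nV_1$-bordism from $M$ to a manifold in the image of $^{n-1}V_1\rightarrow\ ^nV_1$.

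The step requiring the most care, and the only place the argument genuinely departs from Lemma~\ref{geometricfilinfty}, is the compatibility of each geometric operation (coning, gluing nullbordisms, forming the $C_p^{\star n}$-bundle) with the labelling. The key fact to pin down is that the natural labelling of the faces of $C_p^{\star n}$ by simplex-face index is preserved by the translation action of $C_p^n$ on the $C_p^n$ factor, while $\Sigma_n$ merely permutes the labels; this is exactly what guarantees that the labelling survives the quotient $P\times_{C_p^n}C_p^{\star n}$ and equips the total space with a decomposition of its $1$-corners into labelled faces trivializing all face-labelling bundles, as demanded by Definition~\ref{definingfilone}. Once this equivariance is verified, the labelling is locally constant transverse to the corner strata and so extends canonically over cones and bordisms, the constructions above manifestly carry it along, and the exactness-up-to-bordism argument closes word for word, upgrading the statement for $^nV_\infty$ to the present statement for $^nV_1$.
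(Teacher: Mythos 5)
Your proposal is correct and follows the same route as the paper: the paper's proof simply says the argument of Lemma \ref{geometricfilinfty} goes through verbatim once one observes that $C_p^{\star n}$ is an $^{n-1}V_1$-manifold because the $1$-corners of $C_p^n\times\Delta^{n-1}_{\text{top}}$ are naturally labelled, which is exactly the equivariance point you isolate (the labelling survives the quotient by $C_p^n$ since only $\Sigma_n$ permutes the labels). Your write-up just carries this bookkeeping explicitly through each of the exactness steps.
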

\begin{proof}
The proof is identical to that of Lemma \ref{geometricfilinfty} once it is noted that $C_p^{\star n}$ is in fact an $^{n-1}V_1$-manifold because the 1-corners of $C_p^n\times\Delta^{n-1}_\text{top}$ are naturally labelled.
\end{proof}

\begin{lem}\label{homologyofbullett}
The mod $p$ homology of $V_\infty$ and $V_1$ agree with the mod $p$ homology of their associated graded spectra (see Lemmas \ref{geometricfilinfty} and \ref{geometricfilone}). In particular, 
$(H\mathbb{F}_p)_*V_1$ is the free associative $(H\mathbb{F}_p)_*MU$-algebra on the vector space $(H\mathbb{F}_p)_*\Sigma BC_p$.
\end{lem}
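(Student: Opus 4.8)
The plan is to prove this agreement by showing that the homology long exact sequences of the defining filtrations split, i.e. that the spectral sequence of the tower $MU={}^{0}V_{1}\to{}^{1}V_{1}\to\cdots$ (and likewise for $V_\infty$) collapses at $E_1$, where $E_1$ is by construction the homology of the associated graded. Concretely I would induct up the filtration using the cofiber sequences of Lemma \ref{geometricfilone}, and everything reduces to a single claim: for every $n$ the attaching map $\alpha_n\colon\Sigma^{n-1}MU\otimes BC_p^{\otimes n}\to{}^{n-1}V_1$ induces the zero map on $(H\mathbb{F}_p)_*$. Granting this, the long exact sequence of $\Sigma^{n-1}MU\otimes BC_p^{\otimes n}\xrightarrow{\alpha_n}{}^{n-1}V_1\to{}^{n}V_1$ breaks into short exact sequences, so inductively $(H\mathbb{F}_p)_*{}^{n}V_1\cong\bigoplus_{k\le n}(H\mathbb{F}_p)_*(\Sigma^{k}MU\otimes BC_p^{\otimes k})$. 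Since $\mathrm{gr}_k:=\Sigma^{k}MU\otimes BC_p^{\otimes k}$ has homology concentrated in degrees $\ge k$, the tower stabilizes in each degree, and passing to the colimit yields $(H\mathbb{F}_p)_*V_1\cong\bigoplus_{k\ge 0}(H\mathbb{F}_p)_*\mathrm{gr}_k$, which is the homology of the associated graded.

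The core step is to read off $\alpha_n$ on homology from its geometric description $\alpha_n(P)=P\times_{C_p^n}C_p^{\star n}$. Using $C_p^{\star n}=C_p^n\times\Delta^{n-1}_{\mathrm{top}}$, the $m$-corners of this bundle sit over the codimension-$m$ faces of $\Delta^{n-1}$, one for each size-$m$ subset $T\subseteq\{1,\dots,n\}$, and over each such (contractible) face the face-labelling bundle is, up to bordism, just $P$ with its sub-$C_p^{T}$-action. Tracking this through the trivialization $P\times_{C_p^n}(C_p^n\times\mathrm{pt})\cong P$, I expect the component of $(\alpha_n)_*$ into $\mathrm{gr}_m=\Sigma^{m}MU\otimes BC_p^{\otimes m}$ to be a signed sum over $|T|=m$ of the maps $MU\otimes BC_p^{\otimes n}\to MU\otimes BC_p^{\otimes m}$ that are the identity on the factors indexed by $T$ and the transfer $\tau\colon\Sigma^{\infty}_{+}BC_p\to\mathbb{S}$ of the free cover $EC_p\to BC_p$ on every factor outside $T$. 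Since $m\le n-1$, each summand contains at least one $\tau$-factor.

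Now I would invoke the one essential fact: $\tau$ is zero on mod $p$ homology. Indeed $\tau_*\colon H_*(BC_p;\mathbb{F}_p)\to H_*(\mathbb{S};\mathbb{F}_p)$ is multiplication by the covering degree $p$ in degree $0$ and vanishes in higher degrees since $\mathbb{S}$ has no higher mod $p$ homology. By K\"unneth any smash product one of whose factors is $\tau$ is zero on $(H\mathbb{F}_p)_*$, so every component of $(\alpha_n)_*$, and hence $(\alpha_n)_*$ itself, vanishes; this gives the collapse. For the final assertion, the associated graded of $V_1$ is $\bigvee_{n\ge 0}\Sigma^{n}MU\otimes BC_p^{\otimes n}$, whose mod $p$ homology is $\bigoplus_n(H\mathbb{F}_p)_*MU\otimes((H\mathbb{F}_p)_*\Sigma BC_p)^{\otimes n}$ by K\"unneth, which is exactly the underlying module of the free associative $(H\mathbb{F}_p)_*MU$-algebra on $(H\mathbb{F}_p)_*\Sigma BC_p$. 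Granting that the filtration of $V_1$ is multiplicative (cf. Remark \ref{winfty}) with product $\mathrm{gr}_1^{\otimes n}\to\mathrm{gr}_n$ realizing the tensor-algebra multiplication on homology, this identification is one of algebras. The case of $V_\infty$ is formally the same, with $BC_p^{\otimes n}$ replaced by $(MU\otimes BC_p)^{\otimes_{MU}n}_{h\Sigma_n}$ throughout.

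The step I expect to be hardest is the geometric identification of the second paragraph: turning the bordism-level formula for $P\times_{C_p^n}C_p^{\star n}$ into an honest stable map and verifying that its corner-by-corner contributions are precisely the transfers $\tau$, with the correct signs and with the trivializations over the contractible faces handled consistently. For $V_\infty$ there is the additional bookkeeping of carrying the $\Sigma_n$-homotopy-orbit (sign) structure through this identification; the transfer-vanishing input is untouched, but one must check that the $\Sigma_n$-equivariance does not obstruct the degreewise splitting of the long exact sequences.
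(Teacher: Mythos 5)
Your overall strategy --- show that the attaching maps of Lemmas \ref{geometricfilinfty} and \ref{geometricfilone} vanish on $H\F_p$-homology because the $C_p$-transfer does, so the long exact sequences split and the tower collapses to its associated graded --- is exactly the paper's, and your key input ($\tau_*=0$ mod $p$) is the right one. The gap is your second paragraph, which is not an argument but a guess (``I expect the component\ldots''), and which you yourself flag as the hard part. Even granting the face-by-face formula, there is a structural wrinkle you would still have to resolve: only the top quotient ${}^{n-1}V_1\to\mathrm{gr}_{n-1}$ is a canonical map of spectra, so the ``components'' of $(\alpha_n)_*$ into $\mathrm{gr}_m$ for $m<n-1$ are only defined relative to a choice of splitting of $(H\F_p)_*{}^{n-1}V_1$; you would need a downward induction through the filtration of the \emph{target}, verifying at each stage that your geometric formula really computes the induced map to the next associated-graded piece. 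None of this is unfixable, but it is a genuine amount of unfinished work, and the $\Sigma_n$-equivariance bookkeeping for $V_\infty$ that you defer is part of it.

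The paper replaces all of this with a single observation worth internalizing: the diagonal $C_p\subset C_p^n$ is central in $C_p^n\rtimes\Sigma_n$ and acts freely on $C_p^{\star n}$, hence descends to a free action on all of $P\times_{C_p^n}C_p^{\star n}$ (resp.\ $P\times_{C_p^n\rtimes\Sigma_n}C_p^{\star n}$) preserving the ${}^{n-1}V_1$- (resp.\ ${}^{n-1}V_\infty$-) structure. Therefore the \emph{entire} attaching map, not just its graded pieces, factors as
$$\Sigma^{n-1}MU\otimes BC_p^{\otimes n}\rightarrow\ ^{n-1}V_1\otimes BC_p\xrightarrow{1\otimes\tau}\ ^{n-1}V_1,$$
and one application of $\tau_*=0$ on mod $p$ homology kills it, with the symmetric-group case coming along for free since the diagonal is $\Sigma_n$-invariant. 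Your first and third paragraphs (splitting the tower, passing to the colimit using connectivity of $\mathrm{gr}_k$, and identifying the homology of the associated graded with the free associative $(H\F_p)_*MU$-algebra on $(H\F_p)_*\Sigma BC_p$) agree with the paper and are fine as stated.
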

\begin{proof}
It suffices to show that the geometrically defined attaching maps in Lemmas \ref{geometricfilinfty} and \ref{geometricfilone} are null after tensoring with $H\mathbb{F}_p$. Observe that the action of the diagonal $C_p$ in $C_p^{\times n}$ acts freely on $P\times_{C_p^n\times\Sigma_n}C_p^{\star n}$ (resp. $P\times_{C_p^n}C_p^{\star n}$) by its natural action on the right-hand factor, preserving the $^{n-1}V_\infty$-structure. So the quotient by that action is an $^{n-1}V_\infty$- (resp. $^{n-1}V_1$-) manifold. Recall that the transfer map $BC_p\rightarrow\mathbb{S}$ admits the geometric presentation as the map that sends a framed manifold with a $C_p$-bundle to the total space of that bundle. Hence the two attaching maps in question factor as
$$\Sigma^{n-1}(MU\otimes BC_p)^{\otimes_{MU}n}_{h\Sigma_n}\rightarrow\ ^{n-1}V_\infty\otimes BC_p\rightarrow\ ^{n-1}V_\infty$$
$$\Sigma^{n-1}MU\otimes BC_p^{\otimes n}\rightarrow\ ^{n-1}V_1\otimes BC_p\rightarrow\ ^{n-1}V_1$$
where the second maps are the transfer map (suitably tensored). Since the transfer map is null after tensoring with $H\mathbb{F}_p$ the first sentence of the lemma is proved. The second sentence of the lemma about the mod $p$ homology of $V_1$ follows immediately from the identification of the associated graded in Lemma \ref{geometricfilone}.

\end{proof}

\begin{proof}
The cartesian product of an $^nV_\infty$- and and $^mV_\infty$-manifold is an $^{n+m}V_\infty$ manifold. The cartesian product of an $^nV_1$- and and $^mV_1$-manifold $Q$ and $R$ is also naturally an $^{n+m}V_1$-manifold by the decomposition of the 1-corners of $Q\times R$ into $d_1Q\times R,...,d_qQ\times R,Q\times d_1R,...,Q\times d_rR$. Hence the natural map $V_1^*X\rightarrow V_\infty ^*X$ is a map of multiplicative cohomology theories. Lifting this structure via Brown representability gives the representing spectra a ring structure which is respected by the map between them. Since I have used Brown representability, everything is modulo phantom maps.
\end{proof}

\begin{rem}
The ring structure on $V_\infty $ is homotopy commutative (up to phantom maps), since the cartesian product of $V_\infty $-manifolds is symmetric up to cobordism. The same is \textit{not} true for $V_1$. The labelling of the $1$-corners on a cartesian product (described above) is clearly sensitive to the order of the factors in that cartesian product.
\end{rem}

\begin{rem}
Although it is probably not hard to rule out the possibility of phantom maps in the situation above, the much-refined Lemma \ref{highlystructuredbullett} makes it a moot point.
\end{rem}

\subsection{A solution to Bullett's conjecture}

\begin{defn}
Let $R_*$ denote the image in mod $p$ homology of the canonical map $V_1\rightarrow V_\infty$ induced by viewing a $V_1$-manifold as a $V_\infty$-manifold. It is a subalgebra of $(H\mathbb{F}_p)_*V_\infty$. Let $R^*$ be the degree-wise $\mathbb{F}_p$-linear dual of $R_*$. It is a quotient co-algebra of $H\mathbb{F}_p^*V_\infty$.
\end{defn}

\begin{lem}\label{rstar}
$R_*$ is isomorphic to the free commutative $(H\mathbb{F}_p)_*MU$-algebra on the $\mathbb{F}_p$-module $(H\mathbb{F}_p)_*\Sigma BC_p$.
\end{lem}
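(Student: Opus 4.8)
The plan is to realize the claimed isomorphism as the map induced on mod $p$ homology by the ring map $V_1\to V_\infty$, obtaining surjectivity from the commutativity of $V_\infty$ and injectivity from a comparison of filtrations. Throughout write $A=(H\mathbb{F}_p)_*MU$ and $N=(H\mathbb{F}_p)_*\Sigma BC_p$. By Lemma~\ref{homologyofbullett} we have $(H\mathbb{F}_p)_*V_1\cong T_A(N)$, the free associative $A$-algebra, and $R_*$ is by definition the image of the induced $A$-algebra map $T_A(N)\to (H\mathbb{F}_p)_*V_\infty$. Since $V_\infty$ is homotopy commutative and $V_1\to V_\infty$ is multiplicative, this map lands in a commutative ring, hence annihilates all commutators and factors through the free graded-commutative $A$-algebra $\mathrm{Sym}_A(N)$ (the abelianization of $T_A(N)$). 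This produces a surjection $q\colon \mathrm{Sym}_A(N)\twoheadrightarrow R_*$; composing with the inclusion $R_*\hookrightarrow (H\mathbb{F}_p)_*V_\infty$ gives a map $g$, and it suffices to prove that $g$ is injective, for then $q$ is injective and, being surjective by construction, is the desired isomorphism.

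First I would set up the comparison of filtrations. Rewriting the associated graded of Lemma~\ref{geometricfilinfty}: because $(MU\otimes BC_p)^{\otimes_{MU}n}\simeq MU\otimes BC_p^{\otimes n}$, distributing the $n$ suspensions gives $\Sigma^{n}(MU\otimes BC_p)^{\otimes_{MU}n}_{h\Sigma_n}\simeq MU\otimes D_n(\Sigma BC_p)$, where $D_n$ is the $n$th extended power and the Koszul signs of the suspension coordinates exactly encode the sign-twisted $\Sigma_n$-action of Definition~\ref{MUwreath}. By Lemma~\ref{homologyofbullett} the corner-filtration spectral sequence collapses, so $\mathrm{gr}_n (H\mathbb{F}_p)_*V_\infty\cong A\otimes H_*(D_n\Sigma BC_p)$, while $\mathrm{gr}_n\mathrm{Sym}_A(N)\cong A\otimes (N^{\otimes n})_{\Sigma_n}$ is the signed coinvariants. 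The map $g$ is filtered (the weight-$n$ part of $\mathrm{Sym}_A(N)$ is carried by $^nV_1$), and $\mathrm{gr}_n(g)$ is $\mathrm{id}_A$ tensored with the homology of the canonical map $(\Sigma BC_p)^{\otimes n}\to D_n(\Sigma BC_p)$, i.e. with the edge homomorphism $e_n\colon (N^{\otimes n})_{\Sigma_n}\to H_*(D_n\Sigma BC_p)$ of the homotopy-orbit spectral sequence.

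The main obstacle is proving that each $e_n$ is injective; granting this, $\mathrm{gr}_n(g)$ is injective for every $n$, and since the filtration on $\mathrm{Sym}_A(N)$ is exhaustive and bounded below, injectivity on the associated graded forces $g$ to be injective, completing the proof. For $n<p$ the group $\Sigma_n$ has order prime to $p$, so the homotopy-orbit spectral sequence collapses onto its bottom row and $e_n$ is even an isomorphism. For $n\geq p$ the higher homology $H_{>0}(\Sigma_n;-)$ contributes genuine Dyer–Lashof classes, and here I would invoke the classical computation of the mod $p$ homology of extended powers (Nakaoka, Dyer–Lashof; cf. the $H_\infty$-ring literature), by which the signed coinvariants split off as a direct summand of $H_*(D_n\Sigma BC_p)$---equivalently, no differential hits the bottom row---so that $e_n$ is a split injection. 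This step, and in particular the bookkeeping of Koszul signs needed to match the sign-twisted action with the honest extended power of $\Sigma BC_p$, is where the real work lies; the rest is formal.
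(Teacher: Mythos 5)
Your proof is correct and shares its skeleton with the paper's: both arguments run the map $i\colon V_1\to V_\infty$ through the corner filtrations, identify the induced map on associated graded pieces as the passage to $\Sigma_n$-homotopy orbits, and then use Lemma \ref{homologyofbullett} to transfer the graded computation back to the unfiltered homology. Where you differ is in how the image on each graded piece is identified. The paper simply cites Bullett's Proposition 2.10, which computes the image of $i_k^*$ on mod $p$ cohomology as $H\mathbb{F}_p^*MU$ tensored with the antisymmetric part --- dually, exactly the signed coinvariants you produce. You instead split the claim in two: surjectivity of $\mathrm{Sym}_A(N)\to R_*$ comes for free from the graded-commutativity of $(H\mathbb{F}_p)_*V_\infty$ (a genuinely different, and rather slick, source of the relations), while injectivity is extracted from the collapse of the homotopy-orbit spectral sequence for extended powers, i.e.\ the Nakaoka/Dyer--Lashof computation $H_*(D_n X)\cong H_*(\Sigma_n;(H_*X)^{\otimes n})$, which guarantees that the bottom row (the signed coinvariants) injects. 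That classical collapse is essentially the content hiding behind Bullett's Proposition 2.10, so your route buys a self-contained argument at the cost of invoking the extended-power literature; and your observation that $\Sigma^{n}(MU\otimes BC_p)^{\otimes_{MU}n}_{h\Sigma_n}\simeq MU\otimes D_n(\Sigma BC_p)$, with the suspension coordinates absorbing the sign twist, is a clean way to organize the sign bookkeeping that the paper leaves implicit. One small point worth recording if you write this up: commutativity of $V_\infty$ is only asserted up to phantom maps, but phantoms induce zero on $H\mathbb{F}_p$-homology, so $(H\mathbb{F}_p)_*V_\infty$ is genuinely graded-commutative and your surjectivity step is safe.
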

\begin{proof}
The map $i:V_1\rightarrow V_\infty$ respects the filtrations defined in Definitions \ref{definingfilinfty} and \ref{definingfilone}. On the successive quotients it is the canonical quotient $i_k:MU\otimes(BC_p)^k\rightarrow(MU\otimes (BC_p)^k)_{h\Sigma_k}$. Let $A_n^*$ denote the antisymmetric part of $H\mathbb{F}_p^*(MU\otimes(BC_p)^k)$. On mod $p$ cohomology, the image of $i_k^*$ is $H\mathbb{F}_p^*MU\otimes A_n^*$ (cf. \cite{bullet} Proposition 2.10). Combining that with Lemma \ref{homologyofbullett} finishes the proof.
\end{proof}

\begin{defn}\label{univmodporiented}
By \cite{bullet} Proposition 2.14, $R^*$ is a free module over the Steenrod algebra, so the formula $V^*X:=\text{Hom}_{\mathcal{A}_p}(R^*,H\mathbb{F}_p^*X)$ defines a cohomology theory. The \textit{universal mod $p$ oriented ring spectrum} $V$ (cf. \cite{bullet} Corollary 3.3) is the spectrum associated to that cohomology theory. By construction, $V$ is a summand of $V_\infty$, as presented by the canonical inclusion $V^*X\hookrightarrow\text{Hom}_{\mathcal{A}_p}(H\mathbb{F}_p^*V_\infty,H\mathbb{F}_p^*X)$, and $V_*\simeq L_p$ (cf. Definition \ref{modplaz}) by Corollary 3.3 of \cite{bullet}.
\end{defn}

\begin{rem}\label{universalprop}
By construction, $V^*BC_p$ carries the universal mod $p$ formal group law, and the set of homotopy ring maps $V\rightarrow R$ is in bijection with the set of mod $p$ orientations of $R$.
\end{rem}

\begin{lem}\label{freeoverSA}
The mod $p$ cohomology of $V_1$ and $V_\infty$ are free over the Steenrod algebra.
\end{lem}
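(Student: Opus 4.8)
The plan is to read off freeness from the homology computation of Lemma~\ref{homologyofbullett} together with the classical behaviour of $H\mathbb{F}_p^*MU$ over the Steenrod algebra $\mathcal{A}_p$. Let $E=E(Q_0,Q_1,\dots)\subset\mathcal{A}_p$ be the exterior sub-Hopf-algebra on the Milnor primitives; it is normal, with quotient Hopf algebra $\mathcal{A}_p\modmod E\cong H\mathbb{F}_p^*BP$. I would use the relative freeness criterion: a bounded-below, finite-type $\mathcal{A}_p$-module $M$ is $\mathcal{A}_p$-free as soon as $M$ is $E$-free and $M\modmod E=\mathbb{F}_p\otimes_E M$ is $\mathcal{A}_p\modmod E$-free. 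Since $E$ is exterior, $E$-freeness is detected by the vanishing of the $Q_i$-Margolis homologies alone, which is what makes this route attractive.

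The homology computation feeds in as follows. By Lemma~\ref{homologyofbullett} (via Lemmas~\ref{geometricfilone} and~\ref{geometricfilinfty}) the modules $H\mathbb{F}_p^*V_1$ and $H\mathbb{F}_p^*V_\infty$ are assembled, as $H\mathbb{F}_p^*MU$-modules, from the tensor factors $H\mathbb{F}_p^*(BC_p)^{\otimes n}$ (with a sign $\Sigma_n$-action in the $V_\infty$ case). Now $E$ acts trivially on $H\mathbb{F}_p^*MU$ --- it is killed on the Thom class by complex orientation and the action factors through $\mathcal{A}_p\modmod E$ --- while $H\mathbb{F}_p^*MU$ is a free $\mathcal{A}_p\modmod E$-module by Milnor's theorem. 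Using the untwisting isomorphism $T\otimes F\cong T^{\mathrm{triv}}\otimes F$, valid over any Hopf algebra when $F$ is free, the factor $H\mathbb{F}_p^*MU$ becomes essentially inert: the second hypothesis of the criterion, that $M\modmod E$ be $\mathcal{A}_p\modmod E$-free, is automatic, and the first hypothesis reduces to proving that the assembled $BC_p$-part is free over $E$. The intended source of this $E$-freeness is the nontrivial primitive action $Q_ix=y^{p^i}$, $Q_iy=0$ on $H\mathbb{F}_p^*BC_p=\mathbb{F}_p[y]\otimes E(x)$.

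The main obstacle is exactly this last $E$-freeness, and it is genuinely global rather than filtration-level-by-level. One filtration quotient at a time, the Margolis homology $H(H\mathbb{F}_p^*BC_p;Q_i)=\mathbb{F}_p\{1,y,\dots,y^{p^i-1}\}$ is nonzero, so the individual subquotients are not $E$-free and the associated-graded action cannot be used verbatim. I expect freeness to emerge only from the full free-associative-algebra structure of Lemma~\ref{homologyofbullett}, in which the odd generators coming from $\Sigma BC_p$ are tied, through the Bockstein, to the Thom class. Concretely I would compute $H(H\mathbb{F}_p^*V_1;Q_i)$ and $H(H\mathbb{F}_p^*V_\infty;Q_i)$ for all $i$ by combining the K\"unneth formula $H(A\otimes B;Q_i)\cong H(A;Q_i)\otimes H(B;Q_i)$ for Margolis homology with the tensor-algebra description, and show the result vanishes; carrying out this computation is the heart of the proof.

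Finally, for $V_\infty$ one must check that forming $\Sigma_n$-homotopy orbits with the sign representation preserves freeness. Since the $\mathcal{A}_p$-action commutes with the permutation action, the Margolis-homology calculation should descend to the orbits, and I would control the difference between the orbit spectrum and its underlying object by a transfer argument of the type already used to prove Lemma~\ref{homologyofbullett}.
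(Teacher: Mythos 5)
Your strategy is genuinely different from the paper's, and as written it has a gap at exactly the point you yourself flag as ``the heart of the proof'': the vanishing of the $Q_i$-Margolis homologies of $H\mathbb{F}_p^*V_1$ and $H\mathbb{F}_p^*V_\infty$ is stated as the goal of a computation you do not carry out, and the inputs you have assembled do not suffice to carry it out. Lemma \ref{homologyofbullett} identifies $(H\mathbb{F}_p)_*V_1$ with the homology of the associated graded only additively (equivalently, as a splitting of $H\mathbb{F}_p$-modules after smashing with $H\mathbb{F}_p$); it does not identify the $\mathcal{A}_p$-module structure, because the resulting short exact sequences of (co)modules need not split over $\mathcal{A}_p$. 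Indeed here they cannot: as you observe, each subquotient $H\mathbb{F}_p^*(\Sigma^n MU\otimes BC_p^{\otimes n})$ has nonvanishing $Q_i$-Margolis homology for $i\geq 1$ (K\"unneth, together with $E$ acting through the augmentation on $H\mathbb{F}_p^*MU$), so the freeness you are after lives entirely in the extension data between filtration levels --- precisely the data that neither Lemma \ref{homologyofbullett} nor the K\"unneth formula for Margolis homology gives you access to. Closing the gap would require an independent determination of the $Q_i$-action on $H\mathbb{F}_p^*V_1$ itself (for instance on the odd generators coming from $\Sigma BC_p$ and their products with the $MU$-part), which is a substantive computation rather than a routine one; the same issue recurs for $V_\infty$, compounded by the passage to $\Sigma_n$-homotopy orbits, where a transfer argument is unavailable once $p$ divides $n!$.

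The paper's proof is entirely different and much more elementary: it invokes Rourke's splitting criterion (a multiplicative theory $E$ with $E^0(\mathrm{pt})=\mathbb{F}_p$, $E^{>0}(\mathrm{pt})=0$, and a map to $H\mathbb{F}_p$ that is surjective on the lens spaces $S^{2n+1}/C_p$ for large $n$ splits as a wedge of shifts of $H\mathbb{F}_p$), and then verifies the surjectivity by exhibiting explicit manifold representatives of the Poincar\'e duals of the two generators of $H\mathbb{F}_p^*BC_p$, namely $S^{2n-1}/C_p$ for $x$ and the quotient of $D^{2n}$ by the boundary $C_p$-action for $e$. This sidesteps all Margolis-homology bookkeeping and proves the stronger statement that $V_1$ and $V_\infty$ are wedges of shifts of $H\mathbb{F}_p$. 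If you want to salvage your route, you would first need to establish that splitting (or otherwise pin down the $\mathcal{A}_p$-module structure of $H\mathbb{F}_p^*V_1$) by some independent argument --- at which point the Margolis computation becomes unnecessary.
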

\begin{proof}
It suffices to show that $V_1$ and $V_\infty$ are sums of shifts of $H\mathbb{F}_p$. By a theorem of Rourke (cf. \cite{bullet} Theorem 1.1,  \cite{rourke}) a multiplicative cohomology theory $E^*X$ is (represented by) a direct sum of shifts of $H\mathbb{F}_p$ if $E^0(\text{pt})=\mathbb{F}_p$ and $E^{>0}(\text{pt})=0$ and there is a map of cohomology theories $E^*X\rightarrow H\mathbb{F}_p^*X$ which is surjective when $X=S^{2n+1}/C_p$ for sufficiently large $n$ (note that those are finite subcomplexes of $BC_p$).

Consider the morphisms $V_1\rightarrow H\mathbb{F}_p$ and $V_\infty\rightarrow H\mathbb{F}_p$ defined by sending a $V_1$- or $V_\infty$-oriented map $Q\rightarrow X$ to the Poincare dual of the associated homology class. Write $H\mathbb{F}_p^*BC_p\simeq\mathbb{F}_p[e,x]$ with $|e|=1$ and $|x|=2$. Let $i_n:S^{2n+1}/C_p\rightarrow BC_p$ be the inclusion. 
Then the surjectivity condition in Rourke's theorem for the two maps in question reduces to the condition that for sufficiently large $n$ there are $V_1$-(and hence $V_\infty$-) manifolds $Q_{e,n}\rightarrow S^{2n+1}/C_p$ and $Q_{x,n}\rightarrow S^{2n+1}/C_p$ representing the Poincare duals of $i_n^*x$ and $i_n^*e$. 

The Poincare dual of $i_n^*x$ is represented by $Q_{x,n}:=S^{2n-1}/C_p\rightarrow S^{2n+1}/C_p$. Let $Q_{e,n}$ be the quotient of $D^{2n}$ by the action of $C_p$ on its boundary $S^{2n-1}$. Then the Poincare dual of $i_n^*e$ is represented by the map $Q_{e,n}\rightarrow S^{2n+1}/C_p$ induced by including $D^{2n}$ as a half an equator in $S^{2n+1}$. Finally, $Q_{x,n}$ is a $MU$-manifold, and $Q_{e,n}$ is a $V_1$-manifold (in fact, a $^1V_1$ manifold) so the proof is complete.
\end{proof}

\begin{rem}
There is a different proof of the above (see  \cite{modpdualsteen}), which uses formal groups and some defining universal properties of $V_1$ to show that it is an $H\mathbb{F}_p$-algebra in the stable homotopy category, and one derives the mod $p$ (dual) Steenrod algebra at the same time.
\end{rem}

\begin{thm} (Bullett's conjecture)
The spectrum $V$ admits a geometric presentation as the bordism theory of $V_\infty$-manifolds which have trivializable face-labelling bundles (cf. Definition \ref{corners}) up to cobordism.
\end{thm}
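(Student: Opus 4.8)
The plan is to introduce the candidate spectrum $V_{\mathrm{tr}}$ representing the bordism theory in the statement---$V_\infty$-manifolds whose face-labelling bundles are trivializable up to cobordism---and to identify it with the summand $V$ of $V_\infty$ from Definition \ref{univmodporiented} by comparing mod $p$ homology. First I would record the two forgetful maps $V_1\to V_{\mathrm{tr}}\to V_\infty$: a $V_1$-manifold is in particular a $V_\infty$-manifold with trivializable (indeed trivialized) face-labelling bundles, and a $V_{\mathrm{tr}}$-manifold is a $V_\infty$-manifold. By construction the composite $V_1\to V_\infty$ is the canonical map of Lemma \ref{rstar} whose image on $(H\mathbb{F}_p)_*$ is $R_*$, so this composite factors through $V_{\mathrm{tr}}$. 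Since $R^*$ is $\mathcal{A}_p$-free and $V$ is the summand of $V_\infty$ with $(H\mathbb{F}_p)_*V=R_*$, it suffices to prove that $V_{\mathrm{tr}}$ is a generalized Eilenberg--MacLane (GEM) spectrum and that $(H\mathbb{F}_p)_*V_{\mathrm{tr}}\to (H\mathbb{F}_p)_*V_\infty$ is injective with image $R_*$: then the retraction composite $V_{\mathrm{tr}}\to V_\infty\to V$ is a mod $p$ homology isomorphism of bounded-below $p$-torsion GEM spectra, hence an equivalence.

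That $V_{\mathrm{tr}}$ is a wedge of shifts of $H\mathbb{F}_p$ I would obtain exactly as in Lemma \ref{freeoverSA}, via Rourke's theorem. The manifolds $Q_{x,n}$ and $Q_{e,n}$ used there are an $MU$-manifold and a $^1V_1$-manifold respectively, so both carry trivializable face-labelling bundles and are $V_{\mathrm{tr}}$-manifolds; hence the Poincar\'e-dual map $V_{\mathrm{tr}}\to H\mathbb{F}_p$ is surjective on each $S^{2n+1}/C_p$ and Rourke's criterion applies. In particular $V_{\mathrm{tr}}$ is determined by its mod $p$ homology.

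The heart of the argument is the homology computation, which I would run through the corners filtration carried compatibly by all three spectra. On the $n$-th associated graded, Lemmas \ref{geometricfilone} and \ref{geometricfilinfty} identify $\mathrm{gr}_n V_1\to \mathrm{gr}_n V_\infty$ with the homotopy-orbit quotient $X\to X_{h\Sigma_n}$ of $X=\Sigma^n MU\otimes BC_p^{\otimes n}$, where $\Sigma_n$ permutes the $BC_p$-factors and acts by the sign of the suspension coordinates. I would then argue that on mod $p$ homology $\mathrm{gr}_n V_{\mathrm{tr}}$ realizes the image of this map: the $n$-corners of a $V_{\mathrm{tr}}$-manifold carry a free $C_p^n\rtimes\Sigma_n$-action with trivial $\Sigma_n$-bundle, so after choosing a labelling they lift to the $C_p^n$-data computing $\mathrm{gr}_n V_1$ (whence $\mathrm{gr}_n V_1\to\mathrm{gr}_n V_{\mathrm{tr}}$ is onto), while the labelling is well-defined only up to the $\Sigma_n$-action---this is precisely the passage from the free associative $(H\mathbb{F}_p)_*MU$-algebra $(H\mathbb{F}_p)_*V_1$ of Lemma \ref{homologyofbullett} to its abelianization by forgetting the order of the labels. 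By Lemma \ref{rstar} the image of $X\to X_{h\Sigma_n}$ on $(H\mathbb{F}_p)_*$ is the sign-coinvariants, dual to the antisymmetric part $A_n^*$, i.e. the degree-$n$ piece of the free commutative algebra $R_*$. Because the attaching maps of all three filtrations are null on $H\mathbb{F}_p$---Lemma \ref{homologyofbullett} and its $V_{\mathrm{tr}}$-analogue, both factoring through the mod $p$-trivial transfer $BC_p\to\mathbb{S}$ via the free diagonal $C_p$-action---the filtration spectral sequences collapse, and summing over $n$ gives $(H\mathbb{F}_p)_*V_{\mathrm{tr}}\cong R_*$ with the map to $(H\mathbb{F}_p)_*V_\infty$ the inclusion. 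Feeding this into the first paragraph finishes the proof.

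The main obstacle is the middle claim: that ``trivializable face-labelling'' realizes on $(H\mathbb{F}_p)_*$ exactly the commutative quotient $R_*$ and nothing smaller, equivalently that $\mathrm{gr}_n V_1\to\mathrm{gr}_n V_{\mathrm{tr}}$ is surjective and $\mathrm{gr}_n V_{\mathrm{tr}}\to\mathrm{gr}_n V_\infty$ injective on mod $p$ homology. The delicacy is that surjectivity of $V_1\to V_{\mathrm{tr}}$ on bordism groups---clear, by choosing labellings---does not formally give surjectivity on homology, since the geometric maps are not $H\mathbb{F}_p$-linear; one must extract this from the associated-graded description above, and keep careful track of the sign action of $\Sigma_n$ coming from the odd-degree class $e\in H\mathbb{F}_p^1BC_p$, which for odd $p$ is exactly what makes the antisymmetric part $A_n^*$, rather than the symmetric part, the relevant one.
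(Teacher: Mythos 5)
Your proposal diverges from the paper at the outset by taking the theorem's statement literally: you introduce a new bordism spectrum $V_{\mathrm{tr}}$ in which the trivializability condition is imposed on the cobordisms as well as on the manifolds, and you then try to compute $(H\mathbb{F}_p)_*V_{\mathrm{tr}}$. The paper never constructs such an object. Its reading of ``up to cobordism'' is that the cobordism relation is the ambient $V_\infty$-cobordism, so the theory in question is by definition the image subfunctor of $V_1^*X\rightarrow V_\infty^*X$; the whole proof then consists of observing that, since $H\mathbb{F}_p^*V_1$ and $H\mathbb{F}_p^*V_\infty$ are free over $\mathcal{A}_p$ (Lemma \ref{freeoverSA}), this map is precomposition $\mathrm{Hom}_{\mathcal{A}_p}(H\mathbb{F}_p^*V_1,H\mathbb{F}_p^*X)\rightarrow\mathrm{Hom}_{\mathcal{A}_p}(H\mathbb{F}_p^*V_\infty,H\mathbb{F}_p^*X)$ with image $\mathrm{Hom}_{\mathcal{A}_p}(R^*,H\mathbb{F}_p^*X)$, which is $V^*X$ by Definition \ref{univmodporiented}. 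No associated-graded computation for a new spectrum is needed; Lemma \ref{rstar} has already done the homological work.

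The literal route you chose has a genuine gap, and it sits exactly where you flag ``the main obstacle.'' Your argument requires that $(H\mathbb{F}_p)_*\mathrm{gr}_nV_{\mathrm{tr}}$ equal the \emph{image} of $(H\mathbb{F}_p)_*\mathrm{gr}_nV_1\rightarrow(H\mathbb{F}_p)_*\mathrm{gr}_nV_\infty$, but the image of a map of spectra on mod $p$ homology is not the homology of any naturally associated ``image bordism theory,'' and the heuristic you offer (labellings exist but are only well defined up to $\Sigma_n$) does not establish either the surjectivity of $\mathrm{gr}_nV_1\rightarrow\mathrm{gr}_nV_{\mathrm{tr}}$ or the injectivity of $\mathrm{gr}_nV_{\mathrm{tr}}\rightarrow\mathrm{gr}_nV_\infty$ on homology. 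Concretely: two labellings of the same $n$-corner datum differing by an odd permutation become cobordant in $\mathrm{gr}_nV_\infty$, but you have not shown they are cobordant \emph{through} data with trivializable $\Sigma_n$-bundle, so injectivity into $\mathrm{gr}_nV_\infty$ could fail and $(H\mathbb{F}_p)_*V_{\mathrm{tr}}$ could be strictly larger than $R_*$. There is also a prior issue you do not address: because ``trivializable face-labelling bundle'' is a condition rather than a structure, it is not clear that $V_{\mathrm{tr}}$ satisfies exactness and is representable at all, whereas the paper's image subfunctor is manifestly a cohomology theory since it equals $\mathrm{Hom}_{\mathcal{A}_p}(R^*,H\mathbb{F}_p^*X)$ with $R^*$ free. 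Your first paragraph's reduction and your Rourke-type argument reusing $Q_{x,n}$ and $Q_{e,n}$ are fine as far as they go, but without closing the middle step the proof does not go through; the efficient fix is to abandon $V_{\mathrm{tr}}$ and argue as the paper does, directly with the image of $V_1^*X$ in $V_\infty^*X$.
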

\begin{proof}
First I claim that the image of $i_X:V_1^*X\rightarrow V_\infty^*X$ coincides with $V^*X$ (cf. Definition \ref{univmodporiented}), so that $i$ factors as the projection onto a summand of $V_1$ (which must be equivalent to $V$) and the inclusion of the summand $V$ into $V_\infty$. Indeed, the mod $p$ cohomology of $V_1$ and $V_\infty$ are free over the Steenrod algebra by Lemma \ref{freeoverSA}, so $i_X$ can be written as
$$\begin{tikzcd}
V_1^*X\simeq\text{Hom}_{\mathcal{A}_p}(H\mathbb{F}_p^*V_1,H\mathbb{F}_p^*X)\arrow[r,"i^*\circ(-)"]&\text{Hom}_{\mathcal{A}_p}(H\mathbb{F}_p^*V_\infty,H\mathbb{F}_p^*X)\simeq V_\infty^*(X)\\
\end{tikzcd}$$
so its image is $\text{Hom}_{\mathcal{A}_p}(R^*,H\mathbb{F}_p^*X)$, which is the definition of $V^*X$.
So $V^*X$ is the image of $V_1^*X$ in $V_\infty^*X$, and a $V_\infty$-oriented map $[Q\rightarrow X]\in V_\infty^*X$ is in that image precisely when it admits a $V_1$-structure up to cobordism, i.e. when it is cobordant to a $Q'\rightarrow X$ whose face-labelling bundles are all trivialized.
\end{proof}

\begin{rem}
Bullett frames his conjecture on page 24 of \cite{bullet}, where he suggests that $V$ is the image of the natural map $^\infty W\rightarrow V_\infty$ (cf. Remark \ref{winfty}). That map has the same image as the map $V_1\rightarrow V_\infty$ (Compare Lemma \ref{rstar} and Proposition 2.13 of \cite{bullet}, and cf. Remark \ref{winfty}).
\end{rem}

\bibliographystyle{plain}

\bibliography{references}

\end{document}